\documentclass[11pt]{amsart}

\usepackage[utf8]{inputenc}
\usepackage{amsmath, amssymb, amsthm}
\usepackage{mathrsfs}
\usepackage{geometry}
\usepackage{hyperref}
\usepackage{enumitem}
\usepackage{graphicx}
\usepackage{mathtools}
\usepackage{tikz-cd}

\geometry{
 a4paper,
 total={6in, 8.5in},
 left=1.25in,
 top=1.2in,
}

\newtheorem{theorem}{Theorem}[section]
\newtheorem{lemma}[theorem]{Lemma}

\newtheorem{corollary}[theorem]{Corollary}


\title{Transverse fibered knots and Taut Foliations}
\author{Rithwik Susheel Vidyarthi}
\address{Department of Mathematics, Michigan State University, East Lansing, MI 48824, USA}
\email{vidyart2@msu.edu}

\begin{document}

\maketitle

\begin{abstract}
We give a short alternative proof of Honda-Kazec-Matic's result, which states that a fibered knot with pseudo-Anosov monodromy and fractional Dehn twist coefficient $\geq1$ supports a contact structure that is a perturbation of a taut foliation.
\end{abstract}

\section{Introduction}
In \cite{HKM2}, Honda-Kazec-Matic proved:
\begin{theorem}[\cite{HKM2} Theorem 1.2]\label{1}
    Let $S$ be a bordered surface with boundary, and $h$ be  pseudo-Anosov with fractional Dehn twist coefficient $c\geq 1$. Then the contact structure carried by the open book decomposition $(S,h)$ is isotopic to a perturbation of a taut foliation.
\end{theorem}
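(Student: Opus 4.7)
The plan is to build a taut foliation on the closed $3$-manifold $M$ underlying $(S,h)$ directly, and then invoke Eliashberg--Thurston to produce a contact structure isotopic to $\xi_{(S,h)}$. Decompose $M = M_h \cup \bigsqcup_i V_i$, where $M_h$ is the mapping torus of $h$ and the $V_i$ are the solid tori glued along $\partial S \times S^1$. Since $h$ is pseudo-Anosov it carries an invariant measured singular foliation $(\mathcal{F}^u,\mu^u)$ on $S$, with interior $p$-prongs and boundary prongs; suspending $\mathcal{F}^u$ by the monodromy yields a $2$-dimensional singular foliation $\widetilde{\mathcal{F}}$ on $M_h$ whose leaves meet each page along leaves of $\mathcal{F}^u$.

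The first main step is to smooth $\widetilde{\mathcal{F}}$ across the suspended singular orbits to obtain an honest codimension-$1$ foliation on the interior of $M_h$, and to analyze its trace on each boundary torus $T_i = \partial V_i$. This is where the hypothesis on the FDT enters. Choosing the standard pseudo-Anosov representative of $h$, the holonomy of $\widetilde{\mathcal{F}}$ around the core of $V_i$ equals $c$ times the page framing relative to the meridian of $V_i$, so $c\ge 1$ guarantees at least one full twist of room in which to isotope $\widetilde{\mathcal{F}}$ until it spirals onto $T_i$ along a curve of meridional slope. I would then fill each $V_i$ with a disk-like foliation whose leaves spiral out to match the trace on $T_i$, producing a closed foliation $\mathcal{F}$ on $M$.

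Tautness of $\mathcal{F}$ is immediate: an interior page of the open book, together with a short transverse arc pushed into a core of some $V_i$, supplies a closed transversal meeting every leaf. With $\mathcal{F}$ taut, Eliashberg--Thurston yields a $C^0$-close tight contact structure $\xi'$. To finish I would argue that $\xi'$ is supported by the open book $(S,h)$: after a small perturbation the pages $S_t$ are convex with boundary tangent to $\xi'$ and the binding is a positive transverse link for $\xi'$, so Giroux's correspondence identifies $\xi'$ with $\xi_{(S,h)}$ up to isotopy.

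The main obstacle I expect is the spiraling construction in step one: converting the arithmetic condition $c\ge 1$ into the geometric statement that $\widetilde{\mathcal{F}}|_{T_i}$ can be arranged to be exactly of meridional slope requires an explicit model for $h$ near $\partial S$ together with careful bookkeeping of how the boundary prongs of $\mathcal{F}^u$ interact with the fractional twist. Once this model is in hand, the remaining steps reduce to routine applications of Eliashberg--Thurston and the Giroux correspondence.
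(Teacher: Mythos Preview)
Your outline reconstructs Roberts' taut foliation on $M$ (your first two steps are essentially her suspension/branched-surface construction) and perturbs it via Eliashberg--Thurston, which is also where the paper begins. The divergence is at the last step: the paper does \emph{not} try to verify directly that the perturbed $\xi'$ is supported by $(S,h)$. Instead it computes the self-linking number of the binding $K$ in $\xi'$ by a relative Euler-class calculation, obtains $sl(K)=2g-1$, and then invokes a theorem of Etnyre--Van Horn-Morris: in a tight contact structure, a fibered knot with pseudo-Anosov monodromy realizes the Bennequin bound if and only if that contact structure is supported by its fibration. This numerical criterion sidesteps all convex-surface analysis.

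The genuine gap in your plan is the final paragraph. Making the pages $S_t$ convex and the binding positively transverse is \emph{not} sufficient to conclude, via the Giroux correspondence, that $\xi'$ is supported by $(S,h)$; any contact structure on $M$ can be arranged that way relative to any open book after a $C^\infty$-small isotopy. What is actually needed is that the pages become convex with \emph{boundary-parallel} dividing set (equivalently, some contact form for $\xi'$ restricts to a positive area form on every page). Extracting this from a mere $C^0$ approximation of the taut foliation is precisely the ``involved application of convex surface theory'' that constitutes the original Honda--Kazez--Mati\'c proof, and it is far from routine. Conversely, the step you identify as the main obstacle---arranging the trace on $T_i$ to have meridional slope when $c\ge 1$---is exactly the content of Roberts' papers and can simply be cited.
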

This result has figured prominently throughout the literature, providing a useful criterion for establishing the existence of universally tight semi-fillable contact structures \cite{Bergedual} \cite{Lspace} \cite{QPlinks}  \cite{Lev} \cite{BHS}. For instance, it was used in \cite{Lspace} to prove the L-space conjecture for irreducible three-manifolds containing a genus one fibered knot. In a different direction, it has been shown that the theorem does not hold when you increase the number of boundary components \cite{BE}.
\\ \\
The proof of Theorem \ref{1} in \cite{HKM2} consists of an involved application of convex surface theory, starting from the taut foliations constructed by Roberts in \cite{Rob1}\cite{Rob2}. In this note, we provide a new, much shorter(although indirect) proof of their results relying on the following theorem of \cite{EVM}:

\begin{corollary}[\cite{EVM} Corollary 1.3]\label{EVM}
    If $\xi$ is a tight contact structure on a three manifold $M$, then a fibered knot with pseudo-Anosov monodromy, $L$, has a transverse representative with maximum self-linking number, i.e $sl(K)=2g(K)-1$, in $(M,\xi)$ if and only if $\xi$ is supported by the open book decomposition induced by the fibered knot $L$. Here $g(K)$ denotes the genus of the knot $K$.
\end{corollary}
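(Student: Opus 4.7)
The argument splits into the two implications, with the forward direction being the substantive one.

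For the easy $(\Leftarrow)$ direction, suppose $\xi$ is supported by the open book $(S,h)$ induced by $L$. Then by Giroux compatibility, $L$ is realized as the binding of this open book and is positively transverse to $\xi$. A standard computation shows that the binding of a supporting open book has self-linking number $-\chi(S)$ with respect to the page Seifert framing; for a knot with fiber surface of genus $g$, this equals $2g-1$.

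For the harder $(\Rightarrow)$ direction, let $K$ be a transverse representative of $L$ with $sl(K)=2g(K)-1$. The first step is to exploit the equality in Bennequin's inequality to produce a convex Seifert surface $\Sigma$ for $K$ with a very constrained characteristic foliation: after a Giroux perturbation, the dividing set on $\Sigma$ consists entirely of boundary-parallel arcs, with no closed components (otherwise the self-linking bound would be strict). By Giroux's convex surface theory together with the Thurston--Winkelnkemper recipe, such a page can be extended to a compatible open book decomposition $(S',h')$ with binding $K$ that supports $\xi$. This existence step is the technical heart of the argument.

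It remains to identify $(S',h')$ with $(S,h)$. The page $S'$ is a Seifert surface for $K$ with $\chi(S')=1-2g(K)$, hence has minimal genus. Since $L$ is fibered, its minimal-genus Seifert surface is unique up to isotopy in $M\setminus K$ (namely the fiber of the fibration), so $S'\cong S$. The pseudo-Anosov hypothesis then forces rigidity of the monodromy: by Thurston's classification, the pseudo-Anosov mapping class $[h]$ admits a unique representative up to conjugation, so any fibration of $M\setminus K$ with fiber $S$ must have monodromy isotopic to $h$. Therefore $(S',h')\simeq(S,h)$ and $\xi$ is supported by the open book induced by $L$.

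The main obstacle is the existence step in the forward direction: extending a single convex Seifert surface realizing the Bennequin bound to a full open book decomposition compatible with $\xi$. This requires a delicate convex surface-theoretic construction controlling the characteristic foliations on a continuous family of pages, rather than on $\Sigma$ alone, and is precisely the place where one must carefully combine the transverse geometry of $K$ with the smooth fibration data coming from $L$.
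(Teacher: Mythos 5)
This statement is not proved in the paper at all: it is quoted verbatim as Corollary~1.3 of \cite{EVM} (Etnyre--Van Horn-Morris) and used as a black box, so there is no internal proof to compare your sketch against. Judged against the actual argument in \cite{EVM}, your outline gets the overall shape right (the backward direction is the standard computation that the binding of a supporting open book has self-linking number $-\chi(S)=2g-1$; the forward direction is the substantive one and does require controlling characteristic foliations on the whole family of fiber pages), but two of your steps are off in ways that matter.

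First, the existence step is framed incorrectly. The conclusion to be reached is that $\xi$ is supported by \emph{the} open book coming from the given fibration of $L$, not by \emph{some} open book with binding $K$ manufactured via Thurston--Winkelnkemper from a single convex page. Producing one Seifert surface realizing the Bennequin equality and then invoking uniqueness of the minimal genus surface does not close this gap: the general theorem of \cite{EVM} shows that equality $sl(K)=-\chi(F)$ only forces $\xi$ to agree with the supported contact structure \emph{up to adding Giroux torsion along tori incompressible in the knot complement}, so a single page realizing the bound genuinely does not determine $\xi$. The proof instead works with the given fibration and isotopes the contact planes to be positively transverse to the binding and to the pages, analyzing the bifurcations of the characteristic foliations across the entire $S^1$-family of fibers. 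Second, and relatedly, you misidentify the role of the pseudo-Anosov hypothesis. It is not needed for ``rigidity of the monodromy'' --- the fibration of the complement already determines the monodromy up to isotopy --- but rather to guarantee (via Thurston) that the knot complement is atoroidal, so that the Giroux-torsion ambiguity in the general statement disappears and equality in the Bennequin bound pins down $\xi$ exactly. As written, your argument never rules out the torsion possibility, which is precisely the point where the hypotheses of tightness and pseudo-Anosov monodromy must enter.
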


Given the above, to prove Theorem \ref{1}, it suffices to show that the binding of $(S,h)$ realizes the Bennequin bound in the tight contact structure obtained by perturbing the taut foliation. This requires only a calculation of the self-linking number of the binding in the taut foliations constructed in \cite{Rob1}\cite{Rob2}. We perform this in the following section.
\\ \\
\textbf{Acknowledgments}: I would like to thank my advisor, Matt Hedden, for suggesting this problem to me and for having many fruitful discussions. This project was supported by the grant DMS-2104664 and RTG: Algebraic and Geometric Topology at Michigan State University DMS-2135960.

\section{Self-linking number}
The self-linking number is an invariant of transverse knots in a contact 3-manifold $(M,\xi)$. Let $F$ be a Seifert surface for the knot. Then $TF\cap \xi |_{K}$ gives a non-vanishing section $\sigma$ of $\xi$ along $K$. The self-linking number measures the obstruction to extending this to a non-vanishing section over all of $F$. This is exactly given by the relative Euler class of $\xi$ with respect the section $\sigma$ along $K$, $e_{K}(\xi)\in H^2(M,K)$
We define the self-linking number of $K$ as follows:
$$sl(K)=\langle -e_{K}(\xi), [F]\rangle$$

There is another way to calculate the self-linking number, which we will also use. We can look at the characteristic foliation on $F$ induced by $\xi$. Then, $$sl(K)=-(e_+-h_+)+(e_--h_-)$$
where $e_{\pm}$ are the elliptic singularities and $h_{\pm}$ are the hyperbolic singularities. More details about the self-linking number can be found in the survey \cite{Survey}.

\section{Proof}

Denote the knot exterior by $X=M\backslash \nu(K)$. We have the following inclusions: 
\begin{center}
\begin{tikzcd}
(M,\partial X) \arrow[rd, "i_1"]& & (M,K) \arrow[ld, "i_2"'] \\
& (M,\partial X \cup K)
\end{tikzcd}
\end{center}
We can consider the relative Euler class with respect to a non-vanishing section $\sigma$ on $\partial X \cup K$. This gives non-vanishing sections by restrictions to  $\partial X$ and $K$. Hence, we get the following relative Euler classes: $e\in H^2(M,\partial X\cup K)$, $e_{\partial X}\in H^2(M,\partial X)$ and $e_{K}\in H^2(M,K)$. Conversely, we can start with a section on $\partial X$ and $K$ to obtain a section on $\partial X \cup K$. From the naturality of relative Euler classes, we have:
$$i_1^*(e)=e_{\partial X}$$
$$i_2^*(e)=e_{K}$$

Let $\mathscr{F}$ be the taut foliation on $M$ constructed in \cite{Rob1}\cite{Rob2}. This is obtained by first constructing a taut foliation on $X$ and then extending it to all of $M$ by meridional disks. The condition of the fractional Dehn twist coefficient $c\geq 1$ ensures that we realize the boundary slope corresponding to infinity surgery. Now $\mathscr{F}$ can be perturbed to obtain a contact structure $\xi$ on $M$ by Eliashberg-Thurston in \cite{ET}:
\begin{theorem}[Theorem 2.9.1 \cite{ET}]
   For any $C^0$-foliation $\xi$ on an oriented 3-manifold $M$, one can find a family of smooth confoliations $\xi_t$ which continuously depends on the parameter $t$, such that $\xi_0=\xi$, $\xi_1$ is a contact structure, and $\xi_t$ is $C^0$-close to $\xi$ for all $t\in [0,1]$.
\end{theorem}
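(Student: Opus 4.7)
The plan is to produce the interpolating family $\xi_t$ in three stages: smoothing, localized twisting, and propagation. First, I would reduce to the smooth setting by invoking smoothing theorems for $C^0$-foliations on $3$-manifolds, which $C^0$-approximate $\xi$ by a smooth foliation $\ker\alpha$ for some smooth $1$-form $\alpha$ satisfying $\alpha \wedge d\alpha = 0$ (coorientability is automatic since $M$ is oriented). The task then becomes producing a smooth one-parameter family $\alpha_t$ of $1$-forms with $\alpha_t \wedge d\alpha_t \geq 0$ everywhere, $\alpha_0 = \alpha$, and $\alpha_1 \wedge d\alpha_1 > 0$ pointwise.

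For the second stage, I would install a contact region locally. Pick an embedded curve $\gamma$ transverse to $\xi$ and work in a tubular neighborhood $\gamma \times D^2$ with coordinates $(z, r, \theta)$ in which $\alpha = dz$. Let $f : [0, 1] \to \R_{\geq 0}$ be a compactly supported bump with $f(0) = 0$ and set $\alpha_s = dz + s\, f(r)\, d\theta$ for $s \in [0, 1]$. A direct computation gives $\alpha_s \wedge d\alpha_s = s\, f'(r)\, dz \wedge dr \wedge d\theta$, so $\ker \alpha_s$ is a confoliation which is strictly contact on the support of $f'$ and coincides with $\xi$ outside the tubular neighborhood.

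Third, I would propagate this local contact region over all of $M$. Following Eliashberg--Thurston, this is done by flowing the contact locus along leaves of $\xi$ via a family of vector fields tangent to the foliation, exploiting that the confoliation condition is preserved under such deformations. Since every leaf of $\xi$ accumulates on $\gamma$ in the taut case of interest, every point of $M$ is eventually reached by the spreading contact locus, yielding $\alpha_1 \wedge d\alpha_1 > 0$ globally. The $C^0$-distance from $\xi_t$ to $\xi$ throughout the homotopy is controlled by the amplitude of the initial bump $f$, which can be chosen arbitrarily small.

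The principal obstacle is the propagation step: one must maintain the confoliation inequality $\alpha_t \wedge d\alpha_t \geq 0$ at every intermediate time while spreading the contact locus to the entire manifold. This demands careful bookkeeping of the holonomy of $\xi$ and tight control of the leaf-tangent vector fields driving the flow, and is where the bulk of the Eliashberg--Thurston argument lies. A secondary technical point is the initial smoothing, which must be performed without moving the plane field far from $\xi$ in the $C^0$-topology --- this is the nontrivial content of the smoothing theorems invoked above.
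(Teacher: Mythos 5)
The paper offers no proof of this statement: it is imported verbatim from Eliashberg--Thurston and used as a black box, so there is nothing internal to compare your argument against. Judged on its own terms, your sketch has the right global architecture (create a contact zone, then propagate it along the leaves), but the second stage contains a genuine error rather than a technicality.

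The problem is the sign of $f'$. Your computation $\alpha_s\wedge d\alpha_s = s\,f'(r)\,dz\wedge dr\wedge d\theta$ is correct, but a compactly supported bump $f$ with $f(0)=0$ necessarily has $f'<0$ on the outer part of its support, so the confoliation inequality $\alpha_s\wedge d\alpha_s\geq 0$ fails there. To keep the inequality you would need $f$ nondecreasing, but then $\alpha_1 = dz + f(r)\,d\theta$ does not return to $dz$ at the edge of the tube and the perturbation is not supported in the neighborhood. These three requirements (vanishing at $r=0$, vanishing near $r=1$, monotonicity) force $f\equiv 0$. This is not an oversight you can patch locally: it reflects the fact that a foliation cannot be made contact by a perturbation supported in a ball or in a tube around a transversal, because the planes must ``untwist'' on the way back out. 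This is exactly why Eliashberg and Thurston seed the contact zone along a curve \emph{in a leaf} with nontrivial linear holonomy, where the holonomy of the foliation itself supplies twisting that does not have to be undone, and why the product foliation of $S^2\times S^1$ by spheres (which has no holonomy to exploit) is the exception to their theorem --- an exception your argument, if it worked, would not see. Two smaller points: in stage three, ``every leaf accumulates on $\gamma$'' conflates a transverse curve with leafwise accessibility --- what propagation actually requires is that every point be reachable from the contact zone along paths tangent to the confoliation, which tautness alone does not guarantee and which forces the case analysis in Eliashberg--Thurston; and the initial $C^0$ smoothing step is not part of their theorem at all but is the substance of the later work of Kazez--Roberts and Bowden.
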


\begin{corollary}[Corollary 3.2.5 \cite{ET}]
    Contact structures, $C^0$-close to a taut foliation, are symplectically semi-fillable, and therefore, tight.
\end{corollary}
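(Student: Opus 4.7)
The plan is to deduce the corollary in two steps: first construct a symplectic semi-filling of $(M,\xi)$ using the nearby taut foliation $\mathscr{F}$, and then invoke the Gromov--Eliashberg theorem to pass from semi-fillability to tightness.

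For the semi-filling I would exploit the tautness of $\mathscr{F}$ via Sullivan's characterization: since $\mathscr{F}$ is taut, there exists a closed $2$-form $\omega$ on $M$ that is strictly positive on every leaf of $\mathscr{F}$. On a collar $[-1,1]\times M$, I would assemble a $2$-form of the shape $\Omega = \pi^{*}\omega + d(t\,\alpha)$, where $\pi$ is the projection to $M$ and $\alpha$ is a well-chosen $1$-form on $[-1,1]\times M$ arising from the confoliation family of Theorem~2.9.1. The non-degeneracy of $\Omega$ is forced by the positivity of $\omega$ on $\mathscr{F}$ together with the fact that $\alpha$ is $C^{0}$-close to a defining $1$-form of $\mathscr{F}$; the boundary components $\{\pm 1\}\times M$ then inherit the perturbed contact structures $\xi_{\pm 1}$ as positive (convex) contact boundaries. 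This realizes the perturbed contact structures as convex boundary components of a symplectic manifold, i.e.\ as a symplectic semi-filling.

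To handle an arbitrary contact structure $\xi$ merely assumed $C^{0}$-close to $\mathscr{F}$, rather than specifically the one produced by the confoliation family, I would note that the construction is flexible enough to accept $\xi$ directly at the boundary: one can choose the $1$-form $\alpha$ so that it restricts to a contact form for $\xi$ on $\{1\}\times M$, since the $C^{0}$-closeness of $\xi$ to $\mathscr{F}$ preserves exactly the positivity condition that makes $\Omega$ symplectic. This realizes $(M,\xi)$ as convex boundary of the same collar and hence exhibits $\xi$ as symplectically semi-fillable. Tightness is then a direct consequence of the Gromov--Eliashberg theorem: if $\xi$ were overtwisted, an overtwisted disk would serve as the boundary condition for a $1$-parameter family of $J$-holomorphic disks in the filling whose symplectic areas grow without bound, contradicting the standard energy estimate.

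The hard part of the argument, and the heart of the actual Eliashberg--Thurston proof, is verifying that $\Omega$ is genuinely non-degenerate on all of $[-1,1]\times M$ rather than only at the central slice or at the boundary. This requires both careful smoothing of the confoliation family and precise $C^{0}$-estimates relating $\alpha$ to a defining form of $\mathscr{F}$; a self-contained treatment would have to devote essentially all of its effort to this point, which is why I would be content to cite the result rather than reproduce the construction in detail.
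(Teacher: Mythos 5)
The paper offers no proof of this statement: it is imported verbatim from Eliashberg--Thurston and used as a black box, so there is nothing internal to the paper to compare your argument against. Your sketch is, in outline, the genuine proof from that source: Sullivan's closed $2$-form $\omega$ dominating the leaves of the taut foliation, the $2$-form $\Omega=\pi^*\omega+\epsilon\, d(t\alpha)$ on $[-1,1]\times M$, weak convexity of the boundary because positivity of $\omega$ on a plane field is a $C^0$-open condition (which is exactly what lets you feed an arbitrary contact structure $C^0$-close to $\mathscr{F}$ into the boundary, not only the one produced by the confoliation family), and the Gromov--Eliashberg filling-by-holomorphic-disks argument for tightness. So the approach is right and matches the actual proof being cited.

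Three places where the sketch is thin or slightly off, all repairable. First, the small parameter $\epsilon$ is not optional: one computes $\Omega^2=2\epsilon\, dt\wedge\alpha\wedge\omega+2\epsilon^2 t\, dt\wedge\alpha\wedge d\alpha$ (terms pulled back entirely from the $3$-manifold $M$ vanish), and the second summand has the wrong sign on half of the collar, so nondegeneracy comes from taking $\epsilon$ small enough that the Sullivan term dominates --- not from ``positivity of $\omega$ together with $C^0$-closeness'' by itself, as you suggest. Second, the two boundary components cannot both carry the same positive contact structure: $\{1\}\times M$ carries a positive and $\{-1\}\times M$ a negative contact perturbation of $\mathscr{F}$, and the existence of both perturbations is a separate theorem of Eliashberg--Thurston valid for taut foliations; this unavoidable second boundary component is precisely why the conclusion is only \emph{semi}-fillability rather than fillability. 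Third, in the tightness step the symplectic areas of the Bishop family of disks attached to an overtwisted disk are uniformly \emph{bounded} (by Stokes' theorem applied on the overtwisted disk), and the contradiction is with Gromov compactness --- the moduli space would be a compact $1$-manifold with exactly one boundary point --- rather than with areas growing without bound. None of this affects the paper, which only cites the result; your instinct that the hard analytic content lives in the nondegeneracy of $\Omega$ and should be cited rather than reproduced is the same judgment the author made.
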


Now we prove that under these perturbations, the transversality and self-linking number don't change.

\begin{lemma}
    Suppose $\xi$ and $\xi_t$ as above. Let $K$ be a simple closed curve that is transverse to $\xi$. Then $K$ is transverse to $\xi_t$  for all $ t \in [0,1]$. Moreover, the self-linking numbers of $K$ is the same in all $\xi_t$.
\end{lemma}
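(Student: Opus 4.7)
The plan is to verify the two assertions separately, in both cases using only continuity of $t \mapsto \xi_t$ and the fact that the Eliashberg--Thurston family can be chosen arbitrarily $C^0$-close to $\xi = \xi_0$.

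For transversality, the key observation is that $K \pitchfork \xi_t$ is equivalent to $T_pK \not\subset \xi_t(p)$ for every $p \in K$, an open condition in the $C^0$-topology on plane fields along $K$. Since $K$ is transverse to $\xi$ and compact, the angle between $T_pK$ and $\xi(p)$ is bounded below by some $\alpha > 0$; by shrinking the perturbation so that the family $\{\xi_t\}$ lies in a sufficiently small $C^0$-neighborhood of $\xi$, this lower bound persists along the entire family, and therefore $K \pitchfork \xi_t$ for every $t \in [0,1]$.

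For the self-linking number, I would work with the relative Euler class description from Section 2. Fix a Seifert surface $F$ for $K$. Transversality produces a non-vanishing section $\sigma_t := (TF \cap \xi_t)|_K$ of $\xi_t|_K$ for each $t$, so $sl_{\xi_t}(K) = \langle -e_K(\xi_t), [F]\rangle$ is well-defined. Since $F$ is a compact surface with non-empty boundary, the oriented rank-$2$ bundle $\xi_t|_F$ is trivial, and $sl_{\xi_t}(K)$ equals the winding number of $\sigma_t|_K$ measured against a non-vanishing section $\tau_t$ of $\xi_t|_F$. The bundles $\xi_t|_F$ assemble into an oriented $\mathbb{R}^2$-bundle over $F \times [0,1]$, which is again trivial; this allows one to choose $\tau_t$ depending continuously on $t$. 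The section $\sigma_t$ is already continuous in $t$ because $\xi_t$ is. Consequently $t \mapsto sl_{\xi_t}(K)$ is an integer-valued continuous function on $[0,1]$, hence constant.

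The step requiring the most care is the continuous choice of the trivialization $\tau_t$, and this is where continuity of the family $\xi_t$ is genuinely used. It is handled cleanly by trivializing the total oriented plane bundle over $F \times [0,1]$ once, since this product has the homotopy type of the $1$-complex $F$; everything else reduces to the topological fact that integer-valued continuous functions on a connected interval are constant.
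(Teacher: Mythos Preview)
Your proof is correct and follows the same strategy as the paper: transversality is preserved because it is a $C^0$-open condition along the compact curve $K$, and the self-linking number is constant because it is a continuous $\mathbb{Z}$-valued function on $[0,1]$. Your treatment of the second assertion is more careful than the paper's---you justify continuity of $t\mapsto sl_{\xi_t}(K)$ by trivializing the plane bundle over $F\times[0,1]$ and reading $sl$ as a winding number, whereas the paper simply asserts continuity and invokes discreteness of $\mathbb{Z}$.
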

\begin{proof}
   Assume that $\xi=Ker(\alpha)$ and $\xi_t=\ker(\alpha_t)$ for a continuous family of 1-forms $\alpha_t, 0\leq t\leq 1$ with $\alpha_0=\alpha$. Now the condition that $K$ is transverse to $\xi$ is exactly $\alpha(K)\neq 0$, which is an open condition. Since $\alpha_t$ is $C^0$-close to $\alpha_t$, we have $\alpha(K)\neq0\implies \alpha_t(K)\neq0 $. $K$ is transverse to $\xi_t$.\\
   Since the one-parameter family is continuous, the self-linking number defines a continuous function from $t\in[0,1]$ to $\mathbb{Z}$, so it has to be a locally constant function which proves the second part of the lemma.
\end{proof}
Let $F$ be a Seifert surface for $K$ in $M$. Then we can write $F=F'\cup A$, where $F'$ is a properly embedded surface in $X=M\backslash \nu(K)$ and $A$ is an annulus properly embedded in $\nu(K)$ with $\partial A=\partial F'\cup K$. Let $\sigma_{\partial X}$ be a nowhere vanishing section of $\mathscr{F}|_{\partial X}$ is transverse to $\partial X$ pointing outwards, and $e_{\partial X}\in H^2(M,\partial X)$ the corresponding Euler class. The relative Euler classes $e_{\partial X}$ have already been computed in \cite{Hu}. There it is shown that $\langle e_{\partial X},[F']\rangle=1-2g$. \\ \\
Now we let $e_K\in  H^2(M, K)$ denote the relative Euler class corresponding to the section $\sigma_K$ on $K$ which comes from the Seifert surface $F$. Let $e\in H^2(M,\partial X\cup K)$ be the Euler class corresponding to the section $\sigma_{\partial X}\cup \sigma_K$ of $\mathscr{F}$.

\begin{figure}[t]
\centering
\includegraphics[width=8cm]{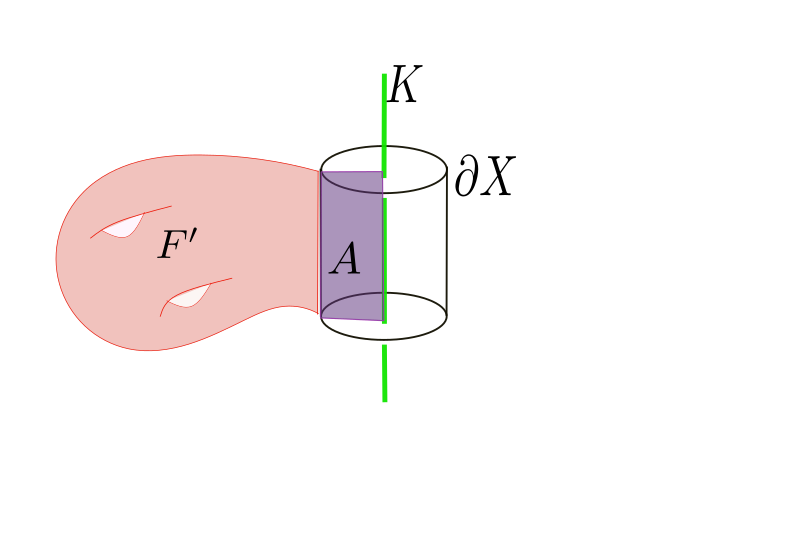}
\caption{The Seifert surface}
\end{figure}

\begin{lemma}
    $\langle  e,[F']\rangle =1-2g$
\end{lemma}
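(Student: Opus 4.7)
The plan is to derive the statement directly from Hu's computation $\langle e_{\partial X},[F']\rangle = 1-2g$, using the naturality relation $i_1^*(e)=e_{\partial X}$ already recorded at the start of this section. All of the geometric content lives in Hu's result; the lemma itself is essentially bookkeeping about which relative (co)homology groups the classes inhabit.

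Concretely, I would first observe that $F' \subset X = M\setminus \nu(K)$ is disjoint from $K$ and has $\partial F' \subset \partial X$. Hence $[F']$ represents a class in $H_2(M,\partial X)$, and the inclusion of pairs $i_1 \colon (M,\partial X) \hookrightarrow (M,\partial X \cup K)$ pushes it forward to a class $i_{1*}[F'] \in H_2(M,\partial X \cup K)$. This pushforward is what is implicitly being paired with $e$ on the left-hand side of the lemma, since $e$ lives in $H^2(M,\partial X \cup K)$.

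Applying naturality of the Kronecker pairing to $i_1$ then gives
\[
\langle e,\, i_{1*}[F']\rangle \;=\; \langle i_1^*(e),\, [F']\rangle \;=\; \langle e_{\partial X},\, [F']\rangle \;=\; 1-2g,
\]
where the middle equality is the relation $i_1^*(e)=e_{\partial X}$ and the rightmost is Hu's computation. The only mild subtlety is ensuring that the section $\sigma_{\partial X}$ used to define $e$ (the outward-transverse section of $\mathscr{F}|_{\partial X}$) matches the convention used in \cite{Hu}; this is immediate from the construction, since $e_{\partial X}$ is defined in the excerpt precisely as $i_1^*(e)$, so no translation between conventions is needed. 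There is no real obstacle here, and the lemma should follow in a few lines. The more substantive step will come next, when this is combined with the contribution of the annulus $A$ along $K$ to recover the self-linking number and verify the Bennequin bound.
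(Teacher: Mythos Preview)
Your proposal is correct and follows essentially the same argument as the paper: start from Hu's computation $\langle e_{\partial X},[F']\rangle=1-2g$, invoke $i_1^*(e)=e_{\partial X}$, and apply naturality of the Kronecker pairing to rewrite this as $\langle e,(i_1)_*[F']\rangle=1-2g$, which is the desired identity. The paper's proof is exactly this chain of equalities, with your version being slightly more explicit about where each class lives.
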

\begin{proof}
    From \cite{Hu}, we know that $\langle e_{\partial X},[F'] \rangle=1-2g$. So:
    \begin{align*}
        &\langle e_{\partial X},[F']\rangle=1-2g\\
        \implies & \langle(i_1)^*(e),[F']\rangle=1-2g\\
        \implies & \langle e, (i_1)_*[F'] \rangle =1-2g\\
        \implies & \langle e,[F']\rangle=1-2g \\
    \end{align*}
\end{proof}

\begin{lemma}\label{lem1}\label{lem1}
    $\langle e_K,[F]\rangle=1-2g \iff \langle e,[A]\rangle=0$
\end{lemma}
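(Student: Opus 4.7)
The plan is to exploit the naturality relation $i_2^*(e)=e_K$ together with the decomposition $F=F'\cup A$ to split the pairing $\langle e_K,[F]\rangle$ into contributions from $F'$ and $A$, then invoke the previous lemma to identify the $F'$-contribution with $1-2g$.

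First I would push the class $[F]\in H_2(M,K)$ forward under $i_2$ to obtain $(i_2)_*[F]\in H_2(M,\partial X\cup K)$. Because $F=F'\cup A$ with $F'\subset X$ and $A\subset \nu(K)$ glued along the longitude $\partial F'=A\cap\partial X$, subdivision in relative singular homology (or equivalently a Mayer--Vietoris argument) gives the identification
\[
(i_2)_*[F]=[F']+[A]\quad\text{in}\ H_2(M,\partial X\cup K),
\]
where both $[F']$ and $[A]$ are viewed as relative cycles with boundary in $\partial X\cup K$. Verifying this decomposition at the chain level is the main bookkeeping step, but it is essentially forced by the geometry: $\partial F'$ lies on $\partial X$, $\partial A=\partial F'\cup K$, and the two pieces fit together to form the absolute cycle $F$ rel $K$.

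With this identification in hand, applying naturality of the relative Euler class and additivity of the evaluation pairing gives
\[
\langle e_K,[F]\rangle=\langle i_2^*(e),[F]\rangle=\langle e,(i_2)_*[F]\rangle=\langle e,[F']\rangle+\langle e,[A]\rangle.
\]
By the preceding lemma, $\langle e,[F']\rangle=1-2g$, so the above equation becomes $\langle e_K,[F]\rangle=(1-2g)+\langle e,[A]\rangle$, from which the claimed equivalence $\langle e_K,[F]\rangle=1-2g\iff \langle e,[A]\rangle=0$ is immediate.

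The only real obstacle is making the chain-level decomposition $(i_2)_*[F]=[F']+[A]$ rigorous; once one fixes the convention that all classes are evaluated in $H_2(M,\partial X\cup K)$ after pushforward, everything else is a formal application of naturality and linearity. No additional input beyond the previous lemma is needed.
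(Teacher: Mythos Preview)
Your proposal is correct and follows exactly the same route as the paper: apply naturality $e_K=i_2^*(e)$, use the decomposition $(i_2)_*[F]=[F']+[A]$ in $H_2(M,\partial X\cup K)$, and invoke the previous lemma for $\langle e,[F']\rangle=1-2g$. If anything, you are more explicit than the paper about the chain-level justification for the decomposition step.
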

\begin{proof}
    \begin{align*}
        &\langle e_K,[F] \rangle=1-2g\\
        \iff & \langle i_2^*(e),[F]\rangle=1-2g\\
        \iff &\langle  e ,(i_2)_*[F]\rangle=1-2g\\
        \iff &\langle  e,(i_2)_*[F'\cup A]\rangle=1-2g\\
        \iff &\langle  e,(i_2)_*([F']+[A])\rangle=1-2g\\
        \iff & \langle e,(i_2)_*[F']\rangle+ \langle e,(i_2)_*[A]\rangle=1-2g\\
        \iff & \langle e,[F']\rangle+\langle e,[A]\rangle=1-2g\\
        \iff &  1-2g +\langle  e,[A]\rangle=1-2g\\
        \iff & \langle e, [A]\rangle=0
    \end{align*}
\end{proof}

Now, in order to calculate $\langle e,[A]\rangle$, notice that since $A$ is a Seifert surface for the link $L=\partial F'\cup K$, this is just the self-linking number of the transverse link $L$. We can compute this by analyzing the characteristic foliation induced by $\mathscr{F}$ on the Seifert surface of $L$, which is the annulus $A$. 

\begin{lemma}\label{lem2}
    $\langle e, [A]\rangle$=0
\end{lemma}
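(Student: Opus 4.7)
The key observation I plan to exploit is that in the Roberts construction of \cite{Rob1}\cite{Rob2}, the taut foliation $\mathscr{F}$ restricted to the solid torus $\nu(K)$ is foliated by meridional disks. Since meridional disks intersect the core $K$ and the longitude $\partial F'\subset \partial \nu(K)$ transversally in a single point each, I would first arrange (by isotopy rel boundary) that the annulus $A$ is transverse to $\mathscr{F}|_{\nu(K)}$; in explicit coordinates $\nu(K)\cong D^2\times S^1$, a straight radial annulus from $K$ to $\partial F'$ is manifestly transverse to the meridional leaves $D^2\times\{\theta\}$. The induced characteristic foliation $\mathscr{F}\cap TA$ on $A$ will then be a singularity-free foliation by arcs running from $K$ to $\partial F'$.

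Next, the tangent line field of this non-singular foliation gives a nowhere-vanishing section $\sigma_A$ of $\mathscr{F}|_A$. To conclude $\langle e,[A]\rangle=0$ it suffices to check that $\sigma_A|_{\partial A}$ is homotopic, through nonvanishing sections of $\mathscr{F}|_{\partial A}$, to the prescribed boundary section $\sigma_K\cup \sigma_{\partial X}|_{\partial F'}$ defining $e$. Along $K$ the two sections agree on the nose: near $K$ the Seifert surface $F$ equals $A$, so $\sigma_K = TF\cap\mathscr{F}|_K = TA\cap\mathscr{F}|_K$, which is exactly the tangent direction to the characteristic foliation of $A$ at $K$. Along $\partial F'$, the section $\sigma_A$ is tangent to $A$ and points from $\partial X$ toward the core $K$, while $\sigma_{\partial X}|_{\partial F'}$ is by definition transverse to $\partial X$ pointing outward from $X$, i.e. into $\nu(K)$. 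Both therefore lie in the open half-plane subbundle of $\mathscr{F}|_{\partial F'}$ of outward-pointing vectors, which is fiberwise contractible, so any two of its sections are homotopic. Hence $\sigma_A$ provides the desired nowhere-vanishing extension of the boundary data over $A$, and $\langle e,[A]\rangle = 0$.

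As an alternative finishing argument, one can perturb $\mathscr{F}$ to the tight contact structure $\xi$, use the preceding lemma to identify the self-linking number of the transverse link $L=K\cup\partial F'$ (with Seifert surface $A$) in $\xi$ with its value for $\mathscr{F}$, and apply the formula $sl(L)=-(e_+-h_+)+(e_--h_-)$ to the characteristic foliation on $A$. A small additional perturbation keeps $A$ transverse to the plane field, so all four singularity counts vanish and $sl(L)=0$, recovering $\langle e,[A]\rangle=0$ via the relationship between $sl$ and the relative Euler class.

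The step I expect to require the most care is the orientation bookkeeping that compares $\sigma_A|_{\partial F'}$ with $\sigma_{\partial X}|_{\partial F'}$ (ensuring they point in the same direction rather than opposite ones), as well as the analogous check on $K$: once the outward-pointing convention is fixed consistently with the orientation of $\mathscr{F}$, both the transversality isotopy and the characteristic-foliation count become immediate from the explicit meridional-disk model of $\mathscr{F}|_{\nu(K)}$.
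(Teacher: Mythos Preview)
Your proposal is correct and aligns with the paper's argument. Your ``alternative finishing argument'' is precisely the paper's proof: interpret $\langle e,[A]\rangle$ as the self-linking number of $L=\partial F'\cup K$, observe that the meridional disks meet $A$ transversally so the characteristic foliation on $A$ has no singularities, and conclude $sl(L)=0$ from the singularity formula. Your primary argument unpacks the same obstruction-theoretic content more explicitly by exhibiting the nowhere-vanishing section $\sigma_A$ and verifying boundary compatibility with $\sigma_K$ and $\sigma_{\partial X}$; the paper leaves this compatibility implicit in its appeal to the self-linking formula, so your extra care there is a genuine (and welcome) addition rather than a departure.
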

\begin{proof}
    From the above discussion, we just need to analyze the characteristic foliation on $A$ induced by $\mathscr{F}$. Since the foliation is extended by meridional disks, which are transverse to the annulus, the foliation on the annulus is by parallel lines. So, there are no singularities. Hence, the self-linking number of the link $L=\partial F' \cup K$ is zero.
\end{proof}

\begin{proof}[Proof of Theorem \ref{1}]
    We see that the self-linking number of $K$ in $\mathscr{F}$ is $2g-1$ using Lemma \ref{lem1} and Lemma \ref{lem2}. Since this is the same as $sl(K)$ in the contact structure $\xi$, using Corollary \ref{EVM}, we conclude the theorem.
\end{proof}

\bibliographystyle{alpha}
\bibliography{references} 

\end{document}